\documentclass[12pt,reqno]{amsart} 

\usepackage{amssymb,bbm,enumitem,url}
\usepackage{aliascnt}
\usepackage{doi}
\usepackage{pgfplots} 
\pgfplotsset{compat=1.12}
\usepackage{esint}

\usepackage[margin=1.25in]{geometry}



%



\newtheorem{theorem}{Theorem}

\newaliascnt{lemma}{theorem}
\newtheorem{lemma}[lemma]{Lemma}
\aliascntresetthe{lemma}

\newaliascnt{proposition}{theorem}

\aliascntresetthe{proposition}

\newaliascnt{corollary}{theorem}
\newtheorem{corollary}[corollary]{Corollary}
\aliascntresetthe{corollary}

\newaliascnt{conjecture}{theorem}

\aliascntresetthe{conjecture}


\newaliascnt{openQ}{theorem}

\aliascntresetthe{openQ}

\newaliascnt{quest}{theorem}

\aliascntresetthe{quest}

\newaliascnt{questx}{conjx}

\aliascntresetthe{questx}

\theoremstyle{definition}

\newaliascnt{defn}{theorem}

\aliascntresetthe{defn}

\newaliascnt{example}{theorem}

\aliascntresetthe{example}

\newaliascnt{rem}{theorem}

\aliascntresetthe{rem}

\makeatletter
\def\tagform@#1{\maketag@@@{\ignorespaces#1\unskip\@@italiccorr}}
\let\orgtheequation\theequation
\def\theequation{(\orgtheequation)}
\makeatother
\def\equationautorefname~{}

%
%

\newcommand{\e}{\varepsilon}
\newcommand{\R}{{\mathbb R}}
\newcommand{\Rn}{{{\mathbb R}^n}}

\begin{document}
\title[Center of mass --- euclidean well-posedness]{Well-posedness of Weinberger's center of mass by euclidean energy minimization}

\keywords{Centroid, moment of inertia, shape optimization, spectral maximization}
\subjclass[2010]{\text{Primary 35P15. Secondary 28A75}}

	\begin{abstract}
The center of mass of a finite measure with respect to a radially increasing weight is shown to exist, be unique, and depend continuously on the measure. 
	\end{abstract}
	
\author[]{R. S. Laugesen}
\address{Department of Mathematics, University of Illinois, Urbana,
	IL 61801, U.S.A.}
\email{Laugesen@illinois.edu}

	\maketitle
	
\begin{center}
\emph{Dedicated to Guido Weiss, with gratitude for his encouragement, and appreciation of his far-reaching vision in Analysis.}
\end{center}

\section{\bf Introduction} \label{sec:intro}

\subsection*{Motivation} 
The center of mass of a finite, compactly supported measure $\mu$ on $\Rn$ is the point $c$ for which $\int (y-c) \, d\mu(y) = 0$. The formula  $c=\fint y \, d\mu(y)$ shows that the center of mass exists, is unique, and depends continuously on $\mu$. That is, the center of mass is well-posed. This paper establishes well-posedness for generalized centers of mass that arise when proving sharp upper bounds on eigenvalues of the Laplacian. 

Consider a radial weight $g(r)$ with $g(0)=0$, as illustrated in \autoref{fig:ggraph}. The task is to find conditions on $g$ and the measure $\mu$ under which the generalized center of mass equation 
\begin{equation} \label{eq:gcenter}
\int_\Rn g(|x+y|) \frac{x+y}{|x+y|} \, d\mu(y) = 0 
\end{equation}
has a solution $x \in \Rn$, and to determine when this point $x$ is unique and depends continuously on $\mu$. Notice that choosing $g(r)=r$ in condition \eqref{eq:gcenter}, and writing $x=-c$, reduces it to the traditional center of mass equation.
\begin{figure}[t]
\begin{center}
\includegraphics[scale=0.35]{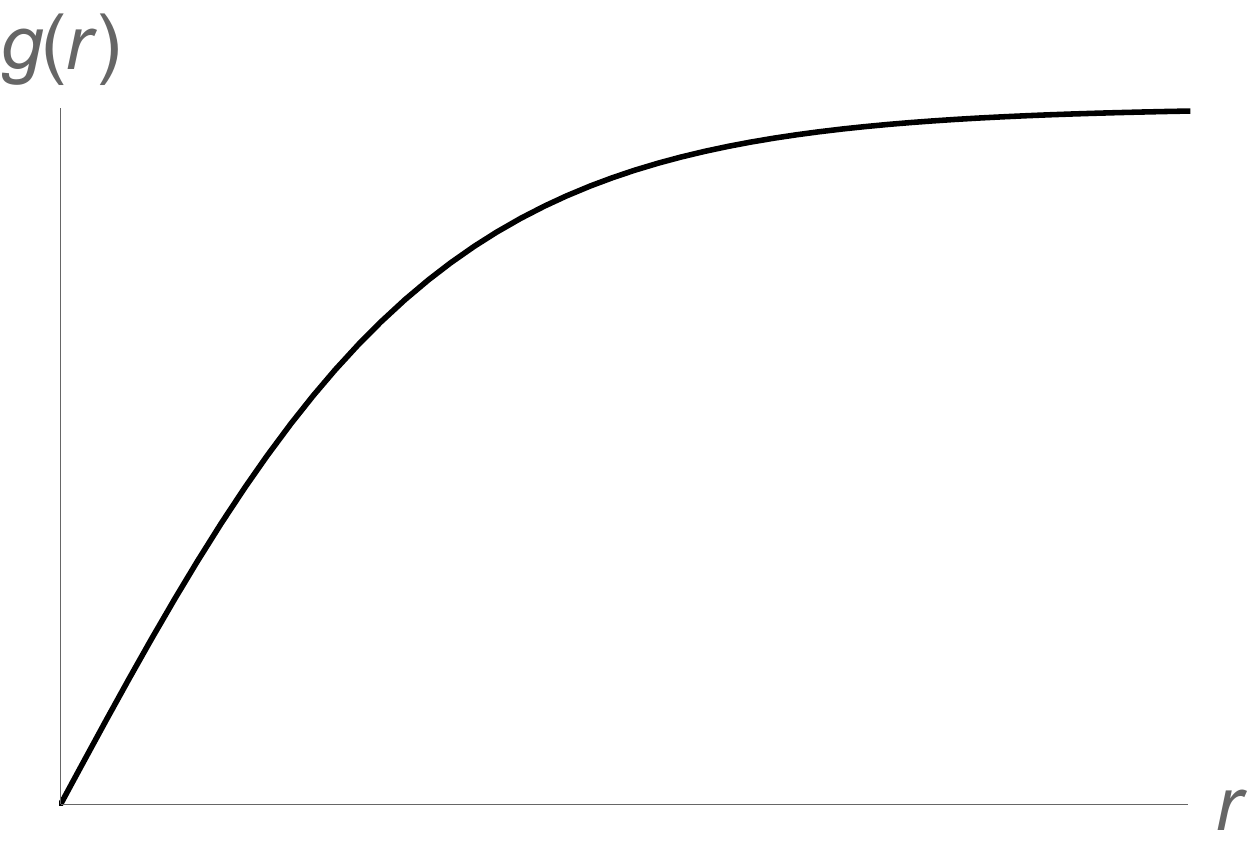} \qquad \includegraphics[scale=0.35]{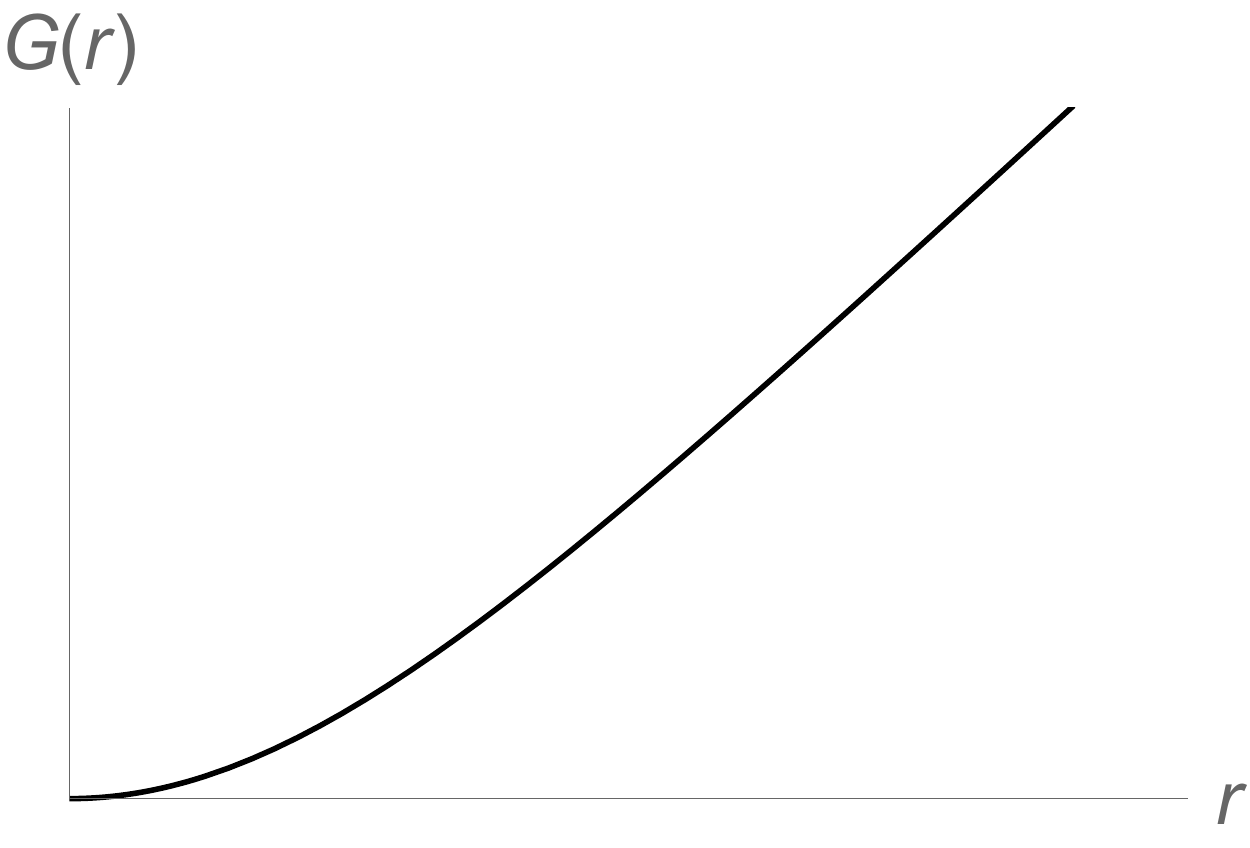}
\end{center}
\caption{\label{fig:ggraph}\textsc{Left:} A radial weight $g(r)$, with $g(0)=0$. The existence results in this paper do not assume $g$ to be nonnegative or increasing. The uniqueness and continuous dependence results do assume $g$ to be positive and increasing. \textsc{Right:} The energy kernel $G$ is the antiderivative of $g$, and so $G$ is convex if $G^\prime=g$ is increasing, as in the example shown.}
\end{figure}

Results for the $g$-center of mass have been driven by the applications at hand. The measure $\mu$ is typically taken to be a density times Lebesgue measure, on some bounded domain, or else $\mu$ is surface area measure on the boundary. The weight $g$ is usually increasing, and is constant for large $r$. The current paper assumes less about $g$, and handles arbitrary finite measures and allows $\mu$ to have unbounded support. 

Spectral applications in euclidean space that require the $g$-center of mass started with Weinberger \cite{W56}, whose work maximizing the second Neumann eigenvalue on a bounded domain provided a foundation for Ashbaugh and Benguria \cite{AB92}  when they maximized the ratio of the first two Dirichlet eigenvalues (the sharp PPW conjecture). Brock \cite{B01} treated the second Steklov eigenvalue, for which $\mu$ is surface area measure on the boundary. Omitting many further contributions over the decades, we arrive at a recent paper by Bucur and Henrot \cite{BH19} using center of mass results to maximize the third Neumann eigenvalue. The $g$-center of mass remains of enduring importance. 

\subsection*{Overview of results} 

\autoref{th:weinberger} proves well-posedness of the generalized center of mass for compactly supported measures, assuming for existence that $\int_0^\infty g(r) \, dr = \infty$, and assuming for uniqueness and continuous dependence that $g$ is increasing. Slightly more will be assumed, in fact, because uniqueness can fail if $g$ is non-strictly increasing and $\mu$ is supported in a line. 

\autoref{cor:wein} deduces the original application by Weinberger, in which $\mu$ is Lebesgue measure on a bounded domain. A ``folded'' variant in \autoref{cor:weinfold} provides an alternative viewpoint on a tool used by Bucur and Henrot \cite{BH19}. 

Measures of unbounded support are treated in \autoref{th:weinbergerallspace}, getting well-posedness. If one only wants the existence of a center of mass point, then one may relax the hypotheses to consider signed measures (\autoref{th:weinbergersigned}). 

\subsection*{Methods} The classical center of mass is found by minimizing the moment of inertia $\int |y-c|^2 \, d\mu(y)$ with respect to the choice of center point $c$. The analogous quantity to minimize for the $g$-center of mass is 
\[
E(x) = \int_\Rn G(|x+y|) \, d\mu(y) , \qquad x \in \Rn , 
\]
where $G^\prime = g$. With some poetic license and abuse of physics, we call $E$ an \emph{energy}. Its gradient is precisely the vector field on the left side of \eqref{eq:gcenter}, and so critical points of the energy (in particular, any minimum points) are automatically centers of mass. For existence of an energy minimizing point one wants to show that the energy tends to infinity as $|x| \to \infty$, while for uniqueness and continuous dependence one wants the energy to be strictly convex.

The energy $E$ is the correct tool when the measure $\mu$ has compact support, as in \autoref{th:weinberger}. Measures with noncompact support are handled in \autoref{th:weinbergerallspace} by utilizing a renormalized energy
\[
\mathcal{E}(x) = \int_\Rn \big( G(|x+y|) - G(|y|) \big) \, d\mu(y) , \qquad x \in \Rn , 
\]
whose kernel extends continuously to the sphere at infinity (see \autoref{le:renormkerneleuclid}).

\subsection*{Prior results}
The traditional method for proving center of mass results, which goes back to Weinberger \cite{W56} (with conformal mapping antecedents in  Szeg\H{o} \cite{S54}), consists of showing that the left side vector field in \eqref{eq:gcenter} points outward when $x$ lies on a sphere of large radius. Then by Brouwer's fixed point theorem, the vector field must vanish somewhere inside the sphere, giving a $g$-center of mass point. This index theory argument does not, by itself, seem capable of proving uniqueness or continuous dependence, which is why the current paper employs exclusively the energy method. 

The energy method for proving existence of the $g$-center of mass was presented by Brasco and Franzina \cite[Lemma 7.1]{BF13} (also Brasco and De Philippis \cite[{\S}7.4.3]{BdP17}). The method was known independently to Ashbaugh in the early 2000s (unpublished). I learned it from him and Langford in conversation some years ago.  

Ashbaugh and Benguria \cite[p.~407]{AB95} pointed out that the Brouwer fixed point method for existence could be applied to a general measure $\mu$. Bucur and Henrot \cite{BH19} applied the Brouwer approach in a euclidean situation with $\mu$ a weighted Lebesgue measure folded across a hyperplane. They also proved uniqueness: \autoref{cor:weinfold} explains their result. Bucur and Henrot further proved existence for noncompactly supported densities; see \autoref{cor:weinRn} and the remarks following it. Densities with noncompact support were treated earlier by Aubry, Bertrand and Colbois \cite[Lemma 4.11]{ABC09}.

Finally, well-posedness of the (conformal) center of mass for a finite measure in the $2$-dimensional unit disk was proved by Girouard, Nadirashvili and Polterovich \cite[Lemmas 2.2.3--2.2.5, 3.1.1]{GNP09} using Szeg\H{o}-type methods and some ingenious estimates in the disk. Their work provides inspiration for the current paper on well-posedness in euclidean space.  

\subsection*{Summary of what is new in this paper, and what lies ahead} 
The energy method in this paper provides a powerful and flexible template for proving existence, uniqueness and continuous dependence of the weighted center of mass.  

The theorems are developed for finite measures. This level of generality requires some care in the uniqueness statements, compared to when the measure is given by a density times Lebesgue measure, because uniqueness can fail if $g$ is non-strictly increasing and the measure $\mu$ is supported in a line. 

Measures with unbounded support are treated in this paper by renormalizing the energy, which seems preferable to earlier approaches involving approximation or truncation together with passing to limits. The use of the renormalized energy, and the uniqueness and continuous dependence results that follow from it for measures of unbounded support, are new to the best of my knowledge. 

The energy methods in this paper not only prove existence of the $g$-center of mass, they suggest that one may compute it numerically by applying a steepest descent or Newton algorithm to converge to an energy minimum. Such numerical methods would be particularly efficient when $g$ is increasing, since then the energy is convex and has just a single global minimum. In contrast, the Brouwer fixed point approach for proving existence of a center of mass does not suggest a practical method for finding it. 

The renormalized energy can be adapted to the hyperbolic ball, where the role of translations is played by M\"{o}bius isometries and the boundary sphere at infinity can be identified with the unit euclidean sphere. This renormalized hyperbolic energy approach will be developed in a subsequent paper \cite{L20h} to obtain well-posedness of hyperbolic centers of mass. Corollaries in that paper include both the conformal center of mass result of Szeg\H{o} \cite{S54} in the disk, and the center of mass normalization of Hersch \cite{H70} on the sphere.

\section{\bf Results}  \label{sec:resultseuclidean}

Assume throughout the paper that 
\[
\text{$g(r)$ is continuous and real valued for $r \geq 0$, with $g(0)=0$,}
\]
and $\mu$ is a Borel measure on $\Rn, n \geq 1$, with
\[
0 < \mu(\Rn) < \infty .
\]
A typical radial profile $g$ is shown in \autoref{fig:ggraph}, although not all our results will assume $g$ is nonnegative  increasing like the example shown. 

Define $v : \Rn \to \Rn$ to be the radial vector field with magnitude $g$, meaning
\[
v(y) = g(|y|) \frac{y}{|y|} , \qquad y \in \Rn \setminus \{ 0 \} ,
\]
and $v(0)=0$. In other words, $v(r\hat{y})=g(r)\hat{y}$ whenever $r \geq 0$ and $\hat{y}$ is a unit vector. Notice $v$ is continuous at the origin, since $g(0)=0$. 

The vector field 
\[
V(x) = \int_\Rn v(x+y) \, d\mu(y) , \qquad x \in \Rn ,
\]
which arises by integrating translates of $v$, is well defined if $\mu$ has compact support or if $g$ (and hence $v$) is bounded. We seek a point $x_c$ for which $V(x_c)=0$. Its antipodal point $-x_c$ then gives a $g$-center of mass for $\mu$. 

The first theorem establishes conditions under which the center of mass exists, is unique, and depends continuously on the measure $\mu$. 
\begin{theorem}[Center of mass for compactly supported measures] \label{th:weinberger}\ \\ 
Assume the measure $\mu$ has compact support.

\noindent (a) [Existence] If $\int_0^\infty g(r) \, dr = \infty$ then $V(x_c)=0$ for some $x_c \in \Rn$. 

\noindent (b) [Uniqueness] If either 
\begin{enumerate}[label=(\roman*),nosep]
\item $g$ is strictly increasing, or
\item $g$ is increasing, $g(r)>0$ for all $r>0$, and $\mu$ is not supported in a line, 
\end{enumerate}
then the point $x_c$ is unique.  

\noindent (c) [Continuous dependence] Suppose $\mu_k \to \mu$ weakly, where the $\mu_k$ are Borel measures all supported in a fixed compact set in $\Rn$ and satisfying $0 < \mu_k(\Rn) < \infty$. If either (i) holds or else (ii) holds for $\mu$ and each $\mu_k$, then $x_c(\mu_k) \to x_c(\mu)$ as $k \to \infty$. 
\end{theorem}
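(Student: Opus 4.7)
The plan is to follow the energy method from the introduction. Set $G(r) = \int_0^r g(s) \, ds$ and
\[
E(x) = \int_\Rn G(|x+y|) \, d\mu(y) .
\]
Compact support of $\mu$ together with continuity of $g$ and $g(0)=0$ justifies differentiation under the integral sign, giving $\nabla E(x) = V(x)$. Thus every minimizer of $E$ solves $V=0$. The three parts of the theorem will follow, respectively, from coercivity of $E$, (strict) convexity, and a subsequence compactness argument.

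For (a), the hypothesis $\int_0^\infty g(r) \, dr = \infty$ means $G(r) \to +\infty$, equivalently $\inf_{r \geq s} G(r) \to +\infty$ as $s \to \infty$. With $\mathrm{supp}(\mu) \subset \overline{B_R(0)}$ and $|x| \geq R$, the bound $|x+y| \geq |x|-R$ for $y$ in the support gives $E(x) \geq \mu(\Rn)\, \inf_{r \geq |x|-R} G(r) \to \infty$ as $|x| \to \infty$. Hence the continuous, coercive energy attains its infimum at some $x_c$, where $V(x_c) = \nabla E(x_c) = 0$.

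For (b) case (i), $g(0)=0$ combined with $g$ strictly increasing forces $G$ to be strictly convex and strictly increasing on $[0,\infty)$, so $x \mapsto G(|x+y|)$ is strictly convex as the composition of a strictly convex strictly increasing function with a convex function; averaging gives $E$ strictly convex, so the minimizer is unique. Case (ii) is the main technical obstacle, since $G$ may only be weakly convex and $E$ only weakly convex. Suppose $x_c \neq x_c'$ both minimize. By convexity of $E$, $E$ is constant on the segment $[x_c, x_c']$. Setting $h_y(t) = G(|x_c + y + t(x_c'-x_c)|)$, each $h_y$ is convex on $[0,1]$ and $\int h_y(t) \, d\mu(y)$ is constant in $t$. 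A standard Jensen-type argument over rational pairs $t_1 < t_2$ (the $\mu$-integral of the nonnegative quantity $\tfrac{1}{2}(h_y(t_1)+h_y(t_2)) - h_y(\tfrac{t_1+t_2}{2})$ is zero, and continuity of $h_y$ upgrades midpoint affineness at rationals to affineness on $[0,1]$) shows $h_y$ is affine on $[0,1]$ for $\mu$-a.e.\ $y$. Since $g(r)>0$ for $r>0$ makes $G$ strictly increasing, affineness of $G \circ \phi_y$ with $\phi_y(t) = |x_c + y + t(x_c'-x_c)|$ forces $\phi_y$ itself to be affine on $[0,1]$; expanding $\phi_y(t)^2$ as a quadratic in $t$ and applying the Cauchy--Schwarz equality case yields $x_c + y \parallel x_c' - x_c$. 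Thus $\mu$-a.e.\ $y$ lies on the line through $-x_c$ and $-x_c'$, contradicting that $\mu$ is not supported in a line.

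For (c), weak convergence tested against a continuous cutoff equal to $1$ on the common compact support $K$ gives $\mu_k(\Rn) \to \mu(\Rn) > 0$, so the coercivity estimate from (a) is uniform for large $k$, bounding $\{x_c(\mu_k)\}$. Along any subsequence with $x_c(\mu_{k_j}) \to x^*$, uniform continuity of $(x,y) \mapsto G(|x+y|)$ on compacta combined with weak convergence $\mu_{k_j} \to \mu$ yields $E_{k_j}(x_c(\mu_{k_j})) \to E(x^*)$ and $E_{k_j}(x) \to E(x)$ for each fixed $x$. Passing to the limit in the minimality inequality $E_{k_j}(x_c(\mu_{k_j})) \leq E_{k_j}(x)$ identifies $x^*$ as a minimizer of $E$, and uniqueness from (b) forces $x^* = x_c(\mu)$. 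Since every subsequence has such a limit, the full sequence converges.
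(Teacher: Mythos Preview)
Your argument is correct and follows the paper's energy strategy. One imprecision in (b)(i): the blanket claim that the composition of a strictly convex, strictly increasing function with a convex function is strictly convex is false in general (take the inner function constant on a segment). What saves you here is that $x\mapsto|x+y|$ cannot take the same value at three distinct collinear points, since a line meets a sphere in at most two points; hence the equality case in the two-step chain
\[
G\big(|(1-\e)x_0+\e x_1+y|\big)\ \le\ G\big((1-\e)|x_0+y|+\e|x_1+y|\big)\ \le\ (1-\e)G(|x_0+y|)+\e G(|x_1+y|)
\]
is ruled out. The paper carries out exactly this equality-case analysis, tracking the triangle inequality and the convexity of $G$ separately.

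In (b)(ii) and (c) your packaging differs from the paper's but is equally valid. For (b)(ii) the paper proves directly that $\Gamma(x)=G(|x|)$ is strictly convex along every line not passing through the origin, and then observes that failure of strict convexity of $E$ along a line $\gamma$ forces $\mu$ to be supported in $-\gamma$; your route (two minimizers $\Rightarrow$ $h_y$ affine for $\mu$-a.e.\ $y$ $\Rightarrow$ Cauchy--Schwarz equality $\Rightarrow$ $y$ lies on the line through $-x_c$ and $-x_c'$) is the same equality-case analysis carried out pointwise in $y$. For (c) the paper shows $E_k\to E$ uniformly on compacta and traps $x_c(\mu_k)$ in a small ball about $x_c(\mu)$ via a local-minimum comparison on $\partial B$, whereas you bound $\{x_c(\mu_k)\}$ via uniform coercivity and pass to subsequential limits; both are standard, and your version has the small advantage of not needing equicontinuity explicitly.
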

The proof is in \autoref{sec:weinbergerproof}. The hypothesis $\int_0^\infty g(r) \, dr = \infty$ in part (a) should be interpreted in terms of improper integrals, saying $\int_0^\rho g(r) \, dr \to \infty$ as $\rho \to \infty$. In part (b)(ii), to say $\mu$ is not supported in a line means $\mu(\Rn \setminus L)>0$ for every line $L$ in $\Rn$.

Uniqueness can fail in part (b)(ii) when the measure $\mu$ is supported in a line. The phenomenon occurs already in dimension $n=1$: take $g(r) = \min(r,1)$, so that $g$ increases from $0$ to $1$ for $r \in [0,1]$ and is constant for $r \in [1,\infty)$, and suppose $\mu=\delta_a+\delta_b$ is a sum of point masses at locations $a$ and $b$ with $a<-1<1<b$. Then 
\[
V(x) = v(x+a) + v(x+b) = g(|x+a|)  \cdot (-1) + g(|x+b|) \cdot 1 = -1+1=0
\]
whenever $x$ is close enough to $0$ that $x+a < -1$ and $x+b > 1$. Thus $V$ vanishes for a whole interval of $x$ values, and so uniqueness fails rather dramatically. 

Continuous dependence can fail in part (c) when the supports of the measures are not all contained in a fixed compact set. For example, consider in $1$ dimension the measure $\mu_k=(1-1/\sqrt{k})\delta_0+\delta_k/\sqrt{k}$. Its traditional center of mass (coming from $g(r)=r$) sits at $-x_c(\mu_k)=\sqrt{k}$, and so runs off to infinity as $k \to \infty$, even though $\mu_k$ converges weakly to $\mu=\delta_0$, whose center of mass is at the origin. 

The ``fixed compact set'' assumption on the measures in part (c) can be dropped if $g$ is bounded. For this, see \autoref{th:weinbergerallspace}(c). 
\begin{corollary}[Weinberger's orthogonality] \label{cor:wein}
Suppose $\Omega$ is a bounded domain in $\Rn$ and $f$ is nonnegative and integrable on $\Omega$ with $\int_\Omega f(y) \, dy > 0$. If $\int_0^\infty g(r) \, dr = \infty$ then a point $x \in \Rn$ exists such that each component of the vector field $v(x+\cdot)$ is orthogonal to $f$, meaning
\[
\int_\Omega v(x+y) f(y) \, dy = 0 .
\]
If in addition $g$ is increasing with $g(r)>0$ for all $r>0$ then the point $x$ is unique.  
\end{corollary}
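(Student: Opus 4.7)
The plan is to reduce the corollary to \autoref{th:weinberger} by choosing the measure $d\mu(y) = f(y) \mathbf{1}_\Omega(y)\,dy$. First I would verify that this $\mu$ satisfies the standing hypotheses of the previous section: $\mu$ is a finite Borel measure because $f$ is integrable on $\Omega$; the mass $\mu(\Rn) = \int_\Omega f\,dy$ is strictly positive by assumption; and $\operatorname{supp}(\mu) \subset \overline{\Omega}$ is compact because $\Omega$ is a bounded domain. With this identification, the orthogonality integral $\int_\Omega v(x+y) f(y)\, dy$ equals the vector field $V(x)$ of \autoref{sec:resultseuclidean}, so producing the required $x$ is the same as producing a zero of $V$.

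Existence then follows at once from \autoref{th:weinberger}(a), whose only hypothesis, $\int_0^\infty g(r)\,dr = \infty$, is exactly the hypothesis already in force in the corollary.

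For uniqueness under the added conditions that $g$ is increasing and $g(r) > 0$ for all $r > 0$, the plan is to invoke \autoref{th:weinberger}(b)(ii). The conditions on $g$ transfer verbatim, so the one thing to check is that $\mu$ is \emph{not} supported in a line. Here the absolute continuity of $\mu$ with respect to Lebesgue measure does the work: for any line $L \subset \Rn$ with $n \geq 2$, one has $\mu(L) = \int_L f\, \mathbf{1}_\Omega\, dy = 0$ because $L$ has $n$-dimensional Lebesgue measure zero, and therefore $\mu(\Rn \setminus L) = \mu(\Rn) > 0$. This line-support check is the only step that goes beyond direct citation of \autoref{th:weinberger}, and it is essentially bookkeeping; I do not anticipate a substantive obstacle in the argument.
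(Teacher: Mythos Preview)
Your proposal is correct and follows exactly the paper's approach: define $d\mu = f\,dy|_\Omega$ and invoke \autoref{th:weinberger}(a) for existence and \autoref{th:weinberger}(b)(ii) for uniqueness, the latter requiring only the observation that an absolutely continuous measure cannot be supported in a line. Your write-up is in fact more explicit than the paper's, which dispatches the line-support check with the single word ``clearly.''
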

\begin{proof} 
Apply \autoref{th:weinberger} parts (a) and (b)(ii) with $d\mu(y) = f(y) \, dy|_\Omega$. Clearly this measure $\mu$ is not supported in any line.
\end{proof}
Weinberger \cite[p.~635]{W56} proved the existence statement of the corollary. (He used $f \equiv 1$, but the general argument is the same.) The uniqueness statement was shown by Bucur and Henrot \cite[Lemmas 5 and 6]{BH19} for a $g$ that is increasing and is constant for $r \geq R$. Their Lemma 5 is not quite correct as stated, because its strict inequality  must actually be an equality when $x$ lies on the line passing through their points $A$ and $B$ with $x$ having distance at least $R$ to both of those points. The set of such $x$ has measure zero, though, and so the deduction of uniqueness in their Lemma 6 remains valid.

For a more modern application of the theorem, let $H$ be a closed halfspace in $\Rn$, and define $F : \Rn \to H$ to be the ``fold map'' that fixes each point in $H$ and maps each point in $\Rn \setminus H$ to its reflection across the hyperplane $\partial H$.  
\begin{corollary}[Orthogonality with a fold] \label{cor:weinfold}
Suppose $\Omega$ is a bounded domain in $\Rn$ and $f$ is nonnegative and integrable on $\Omega$ with $\int_\Omega f(y) \, dy > 0$. If $\int_0^\infty g(r) \, dr = \infty$ and the halfspace $H$ and its fold map $F$ are given, then a point $x \in \Rn$ exists such that each component of the vector field $v(x+F(\cdot))$ is orthogonal to $f$, meaning
\[
\int_\Omega v(x+F(y)) f(y) \, dy = 0 .
\]
If in addition $g$ is increasing with $g(r)>0$ for all $r>0$ then the point $x=x(H)$ is unique and depends continuously on the halfspace $H$.  
\end{corollary}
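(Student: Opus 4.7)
The plan is to apply \autoref{th:weinberger} to the pushforward measure
\[
\mu_H := F_*(f\, dy|_\Omega),
\]
which is a finite Borel measure on $\Rn$ with $\mu_H(\Rn) = \int_\Omega f\, dy > 0$ and support contained in the compact set $F(\overline{\Omega}) \subset H$. The change of variables formula yields
\[
\int_\Omega v(x + F(y)) f(y) \, dy = \int_\Rn v(x+z) \, d\mu_H(z) = V(x),
\]
so the existence of $x$ satisfying the orthogonality condition follows at once from \autoref{th:weinberger}(a). The hard part will be uniqueness: a priori the fold might collapse the measure onto a line, in which case \autoref{th:weinberger}(b)(ii) would not be applicable.

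For uniqueness I would therefore verify that $\mu_H$ is not supported in any line. Writing $R$ for the reflection across $\partial H$, observe that for any line $L \subset \Rn$,
\[
F^{-1}(L) = (L \cap H) \cup (R(L) \setminus H) \subseteq L \cup R(L),
\]
which is a union of two lines. When $n \geq 2$ this has $n$-dimensional Lebesgue measure zero, so by the definition of pushforward and the absolute continuity of $f\, dy|_\Omega$ with respect to Lebesgue measure,
\[
\mu_H(L) = \int_{\Omega \cap F^{-1}(L)} f \, dy = 0 < \mu_H(\Rn).
\]
Hence $\mu_H$ is not supported in any line, and \autoref{th:weinberger}(b)(ii) delivers uniqueness of $x(H)$.

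For continuous dependence I would parameterize halfspaces by an inward unit normal $\nu \in S^{n-1}$ together with a scalar offset, so that $H_k \to H$ means these parameters converge. Then $F_{H_k}(y) \to F_H(y)$ for every $y \in \Rn$; the only delicate case is $y \in \partial H$, where the reflection $R_{H_k}(y)$ still tends to $y$ as $\partial H_k$ approaches $\partial H$. Since $\Omega$ is bounded and the hyperplanes $\partial H_k$ stay within a fixed bounded region, the values $\{F_{H_k}(y) : k \in \mathbb{N},\, y \in \overline{\Omega}\}$ lie in a common compact set. Dominated convergence then yields $\int_\Omega \phi(F_{H_k}(y)) f(y)\, dy \to \int_\Omega \phi(F_H(y)) f(y)\, dy$ for every bounded continuous $\phi$, that is, $\mu_{H_k} \to \mu_H$ weakly, with a common compact support. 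Invoking \autoref{th:weinberger}(c)---whose hypothesis (ii) holds for $\mu_H$ and every $\mu_{H_k}$ by the argument of the previous paragraph---then finishes the proof.
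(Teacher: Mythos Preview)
Your proposal is correct and follows essentially the same route as the paper: push forward $f\,dy|_\Omega$ under the fold map, apply \autoref{th:weinberger}(a) for existence, verify condition (ii) of \autoref{th:weinberger}(b) for uniqueness, and check weak convergence of the pushforwards (with common compact support) for \autoref{th:weinberger}(c). Your justification that $\mu_H$ is not supported in a line, via $F^{-1}(L)\subseteq L\cup R(L)$ having Lebesgue measure zero, is more explicit than the paper's, which simply asserts this fact; note, as you implicitly observe, that this step (and hence the uniqueness claim as stated) requires $n\geq 2$.
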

The proof can be found in \autoref{sec:weinfoldproof}. The corollary, when applied with $f \equiv 1$, gives orthogonality of the constant eigenfunction to a folded copy of the vector field $v$. This orthogonality is due to Bucur and Henrot \cite[part of Proposition 10]{BH19}. The corollary does not address the more difficult part of their proposition, which simultaneously achieves orthogonality with respect to the first nonconstant eigenfunction, by means of a subtle homotopy argument that uncovers a good choice for the halfspace $H$. Incidentally, Bucur and Henrot formulated their construction somewhat differently, in terms of gluing rather than folding. 

Next we allow measures of unbounded support, provided $g$ is bounded, which we did not need to assume in \autoref{th:weinberger} because the measure there had compact support. Write
\[
g(\infty) = \lim_{r \to \infty} g(r)
\]
for the limiting value of $g$ at infinity, when that limit exists. 

\begin{theorem}[Center of mass for arbitrary finite measures] \label{th:weinbergerallspace}\ 

\noindent (a) [Existence] If $g$ has a positive and finite limit at infinity, $0 < g(\infty) < \infty$, then $V(x_c)=0$ for some $x_c \in \Rn$. 

\noindent (b) [Uniqueness] If either 
\begin{enumerate}[label=(\roman*),nosep]
\item $g$ is strictly increasing and bounded, or
\item $g$ is increasing and bounded, with $g(r)>0$ for all $r>0$, and the measure $\mu$ is not supported in a line, 
\end{enumerate}
then the point $x_c$ is unique.  

\noindent (c) [Continuous dependence] Suppose $\mu_k \to \mu$ weakly, where the $\mu_k$ are Borel measures satisfying $0 < \mu_k(\Rn) < \infty$ for all $k$. If either (i) holds or else (ii) holds for $\mu$ and each $\mu_k$, then $x_c(\mu_k) \to x_c(\mu)$ as $k \to \infty$. 
\end{theorem}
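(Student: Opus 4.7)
The plan is to repeat the energy-minimization strategy of Theorem~\ref{th:weinberger}, but with $E$ replaced by the renormalized energy
\[
\mathcal{E}(x) = \int_\Rn \bigl( G(|x+y|) - G(|y|) \bigr) \, d\mu(y).
\]
Boundedness of $g$, say $|g|\leq M$, yields $|G(|x+y|)-G(|y|)| \leq M\bigl||x+y|-|y|\bigr| \leq M|x|$ uniformly in $y$, so $\mathcal{E}$ is well defined on $\Rn$ for any finite $\mu$. The same bound justifies differentiation under the integral sign and gives $\nabla\mathcal{E}(x)=V(x)$; in particular any minimizer of $\mathcal{E}$ is a zero of $V$.

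For part (a), I would prove coercivity of $\mathcal{E}$. The radial derivative along $\hat{x}$ is
\[
\frac{d}{ds}\mathcal{E}(s\hat{x}) = \hat{x}\cdot V(s\hat{x}) = \int_\Rn g(|s\hat{x}+y|)\,\frac{\hat{x}\cdot(s\hat{x}+y)}{|s\hat{x}+y|}\, d\mu(y).
\]
The integrand converges pointwise to $g(\infty)$ as $s\to\infty$ and is dominated by $M$. Choosing $R_1$ with $\mu(\{|y|>R_1\})$ small, applying dominated convergence on $\{|y|\leq R_1\}$ (uniformly in $\hat{x}$), and controlling the tail by the uniform bound $M$ yields $\hat{x}\cdot V(s\hat{x}) \to g(\infty)\mu(\Rn)>0$ as $s\to\infty$, uniformly in $\hat{x}$. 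Integrating in $s$ gives $\mathcal{E}(R\hat{x}) \geq \tfrac{1}{2}g(\infty)\mu(\Rn)\,R - C$ for $R$ large, independent of $\hat{x}$, which is coercivity; continuity of $\mathcal{E}$ then supplies a global minimizer $x_c$.

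For part (b) I would verify strict convexity of $\mathcal{E}$. Under (i) or (ii), $G$ is increasing and convex, so each summand $x\mapsto G(|x+y|)$ is convex and, along a line $L$ through $x$-space, is strictly convex unless $-y\in L$; in that exceptional case strictness is still forced if $g$ is strictly increasing. Under (ii) the exceptional $y$ form a single line, which carries $\mu$-measure less than $\mu(\Rn)$ by hypothesis, so the integrated second directional derivative is strictly positive in every direction and $\mathcal{E}$ is strictly convex. Uniqueness of $x_c$ follows. For part (c), weak convergence of finite measures $\mu_k\to\mu$ implies tightness of the sequence, so the argument of (a) runs uniformly in $k$: one obtains $R_0$ and $\eta>0$ with $\mathcal{E}_k(x)\geq \eta|x|$ whenever $|x|\geq R_0$, for all large $k$. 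Because $G(|x+y|)-G(|y|)$ is a bounded continuous function of $y$ for each fixed $x$, weak convergence gives $\mathcal{E}_k(x)\to\mathcal{E}(x)$ pointwise, and the uniform Lipschitz estimate $|\nabla\mathcal{E}_k|\leq M\mu_k(\Rn)$ upgrades this to locally uniform convergence. Uniform coercivity, locally uniform convergence, and strict convexity of $\mathcal{E}$ together force the unique minimizers $x_c(\mu_k)$ to converge to $x_c(\mu)$.

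The main obstacle, I expect, is part (c): one must coordinate the $k\to\infty$ limit with the $|x|\to\infty$ limit, showing that $V_k(s\hat{x}) \to g(\infty)\mu_k(\Rn)\hat{x}$ uniformly in \emph{both} $k$ and $\hat{x}$. This leans crucially on tightness of $(\mu_k)$, which is where the need for the boundedness of $g$ (and hence for the renormalization of the energy) becomes essential; the remaining pieces follow standard variational bookkeeping.
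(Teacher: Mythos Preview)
Your proposal is correct and follows the same overall energy-minimization strategy as the paper, but the execution differs in two places worth noting.

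For coercivity in part~(a), the paper estimates the renormalized kernel $\Gamma(x+y)-\Gamma(y)$ directly: a crude bound $|\Gamma(x+y)-\Gamma(y)|\leq(\sup|g|)\,|x|$ handles the tail $\{|y|>R\}$, while on the ball $\{|y|\leq R\}$ one shows $\Gamma(x+y)-\Gamma(y)>\tfrac12 g(\infty)|x|$ for large $|x|$ by elementary estimates on $\int_{|y|}^{|x+y|}g(s)\,ds$. You instead analyze the radial derivative $\hat{x}\cdot V(s\hat{x})$ and integrate in $s$. Both are valid; the paper's version is a touch more self-contained (no dominated-convergence bookkeeping), while yours makes the link to the vector field $V$ more visible.

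For part~(c) there is a genuine methodological difference. You aim for \emph{uniform} coercivity of the family $(\mathcal{E}_k)$, which is why you invoke tightness of $(\mu_k)$ and flag the $k\to\infty$ versus $|x|\to\infty$ interchange as the main obstacle. The paper sidesteps this entirely: once $\mathcal{E}_k\to\mathcal{E}$ locally uniformly, the strict minimizing property of $x_c(\mu)$ gives $\mathcal{E}_k(x_c(\mu))<\min_{\partial B}\mathcal{E}_k$ on any small ball $B$ around $x_c(\mu)$, so $\mathcal{E}_k$ has a local minimum in $B$; strict convexity of $\mathcal{E}_k$ then forces that local minimum to be the global one $x_c(\mu_k)$. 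No uniform-in-$k$ coercivity is needed, only coercivity of each $\mathcal{E}_k$ separately (already given by part~(a)). Your route works, but the paper's local-min-is-global-min trick is shorter and avoids the tightness detour you anticipated as the hard step.
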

The proof is in \autoref{sec:weinbergerproofallspace}. The weak convergence hypothesis in part (c) means that $\int_\Rn \psi \, d\mu_k \to \int_\Rn \psi \, d\mu$ as $k \to \infty$ for each bounded continuous function $\psi$ on $\Rn$. 

The existence claim in \autoref{th:weinbergerallspace}(a) holds even when $\mu$ is a \textbf{signed} measure, as the next result shows. 
\begin{theorem}[Center of mass for a signed measure --- existence] \label{th:weinbergersigned} Suppose $\mu$ is a finite signed Borel measure on $\Rn$ with 
\[
0 < \mu(\Rn) \leq |\mu|(\Rn) < \infty .
\]
If $g$ has a positive and finite limit at infinity, $0 < g(\infty) < \infty$, then $V(x_c)=0$ for some $x_c \in \Rn$. 
\end{theorem}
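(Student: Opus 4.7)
The plan is to follow the topological-degree / Brouwer fixed-point approach recalled in the ``Prior results'' subsection, because the energy-minimization method underlying \autoref{th:weinberger} and \autoref{th:weinbergerallspace} is not available for signed measures: the renormalized energy $\mathcal{E}$ is still well defined (its kernel is bounded), but with no sign control on $\mu$ there is no reason for $\mathcal{E}$ to be convex or coercive. For existence alone it suffices to exhibit a continuous vector field $V$ on $\Rn$ that points strictly outward on a sphere of sufficiently large radius; a standard degree argument then produces a zero of $V$ inside.

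First I would check that $V$ is continuous on $\Rn$. Since $g$ is continuous on $[0,\infty)$ and has a finite limit at infinity, $g$ is bounded, so $|v| \leq \|g\|_\infty$ and $|V(x)| \leq \|g\|_\infty \, |\mu|(\Rn) < \infty$; continuity of $V$ then follows from continuity of $v$ together with dominated convergence applied against $|\mu|$. Next I would establish the outward-pointing estimate $V(x) \cdot x/|x| > 0$ for $|x|$ large. Given $\e > 0$, choose a ball $B_\rho$ so large that $|\mu|(\Rn \setminus B_\rho) < \e$ and $\mu(B_\rho) > \tfrac{1}{2}\mu(\Rn)$, which is possible because $\mu(\Rn)>0$ and $|\mu|$ is finite. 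For $y \in B_\rho$ and $|x|=R \gg \rho$, both $g(|x+y|) \to g(\infty)$ and $(x+y)/|x+y| \to x/|x|$ uniformly in $y$, so that $v(x+y) \cdot x/|x| \to g(\infty)$ uniformly on $B_\rho$. Splitting the integral defining $V(x) \cdot x/|x|$ into pieces over $B_\rho$ and $\Rn \setminus B_\rho$, the first piece approaches $g(\infty)\mu(B_\rho) > \tfrac{1}{2} g(\infty)\mu(\Rn)$, while the second is bounded in absolute value by $\|g\|_\infty \, \e$. Choosing $\e < g(\infty)\mu(\Rn)/(4\|g\|_\infty)$ and then $R$ sufficiently large yields the desired positivity.

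With the outward-pointing property in hand, I would finish via Brouwer degree: the straight-line homotopy $H(x,t) = (1-t)x + tV(x)$ satisfies $H(x,t) \cdot x > 0$ on $\partial B_R$, so $H(\cdot,t)$ never vanishes there, and therefore the degree of $V$ on $B_R$ equals the degree of the identity, namely $1$. Hence $V$ has a zero $x_c \in B_R$. The main delicacy is the split in the second step: one must arrange that $\mu(B_\rho)$ captures nearly all of the \emph{net} mass $\mu(\Rn)$ while the total-variation tail $|\mu|(\Rn \setminus B_\rho)$ stays small, and it is precisely here that the hypothesis $0 < \mu(\Rn) \leq |\mu|(\Rn) < \infty$ is used in an essential way.
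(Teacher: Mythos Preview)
Your argument is correct and self-contained: the outward-pointing estimate on $V(x)\cdot x/|x|$ and the Brouwer-degree conclusion are both sound. However, your opening assertion that the energy-minimization method ``is not available for signed measures'' is mistaken, and in fact the paper proves this theorem precisely by energy minimization. Convexity of $\mathcal{E}$ is indeed lost (and with it uniqueness, as the example following the theorem shows), but coercivity survives. The paper establishes a two-sided refinement of \autoref{le:bettereuclid}, namely \autoref{le:bettereuclidtwosided}: for $y$ in a fixed ball $B(R)$ and $|x|$ large,
\[
(1-2\e)g(\infty)|x| < \Gamma(x+y)-\Gamma(y) < (1+2\e)g(\infty)|x|.
\]
Applying the lower bound against $d\mu^+$, the upper bound against $-d\mu^-$, and the crude bound of \autoref{le:worsteuclid} against $d|\mu|$ on $\Rn\setminus B(R)$, one obtains $\mathcal{E}(x)\geq c|x|\to\infty$ for a positive constant $c$, provided $\e$ and $R$ are chosen so that $(1-2\e)\mu^+(B(R))$ dominates $(1+2\e)\mu^-(B(R))$ plus the tail term---which is possible exactly because $\mu^+(\Rn)>\mu^-(\Rn)$. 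The minimum of $\mathcal{E}$ then furnishes the zero of $V=\nabla\mathcal{E}$.

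The two proofs share the same skeleton---split $\Rn$ into a large ball where asymptotics are uniform and a complement of small total variation---but differ in the quantity being estimated. You bound the radial component of $V$ directly and invoke degree theory; the paper bounds $\mathcal{E}$ and needs only that a continuous coercive function attains its minimum. The paper's route keeps the whole article within the energy framework and deliberately avoids Brouwer's theorem (a stated goal in the ``Prior results'' discussion); your route is arguably more direct but imports a topological tool the author wished to dispense with.
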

See \autoref{sec:weinbergerproofsigned} for the proof. The theorem makes no claims about uniqueness, because uniqueness can fail for signed measures, by the following $1$-dimensional example. Let 
\[
\mu=-\delta_{-1}+3\delta_0-\delta_1 ,
\] 
so that $\mu$ consists of negative point masses at $x=\pm 1$ and a triple point mass at the origin, giving $\mu(\R)=1>0$. Choosing 
\[
g(r) = 
\begin{cases}
r , & 0 \leq r \leq 1, \\
2r-1 & 1 \leq r \leq 2 , \\
3 & r \geq 2 ,
\end{cases}
\]
we compute that $v(-1)=-1,v(0)=0,v(1)=1,v(2)=3$ and hence 
\[
V(0)=(-1)(-1)+0 \cdot 3+1(-1)=0 , \qquad V(1)=0(-1)+1 \cdot 3 +3(-1)=0 .
\]
Thus $V$ vanishes at more than one point, In fact, one can check that $V(x)=0$ for all $x \in [-1,1]$, and so uniqueness fails badly. 

Finally, we specialize the last two theorems to sign-changing densities that may have unbounded support. 
\begin{corollary}[Weinberger's orthogonality for signed densities] \label{cor:weinRn}
If $f$ is real-valued and integrable on $\Rn$ with $\int_\Rn f(y) \, dy > 0$ and $g$ has a positive and finite limit at infinity, $0 < g(\infty) < \infty$, then a point $x \in \Rn$ exists such that each component of the vector field $v(x+\cdot)$ is orthogonal to $f$, meaning
\[
\int_\Rn v(x+y) f(y) \, dy = 0 .
\]
If $f$ is nonnegative with $0 <\int_\Rn f(y) \, dy < \infty$ and $g$ is increasing and bounded with $g(r)>0$ for all $r>0$, then the point $x$ is unique.  
\end{corollary}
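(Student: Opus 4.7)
The plan is to apply \autoref{th:weinbergersigned} for existence and \autoref{th:weinbergerallspace}(b)(ii) for uniqueness, taking $d\mu(y) = f(y) \, dy$. Both results are stated for abstract Borel measures, so the corollary amounts to checking that the density $f$ produces a measure to which those theorems apply.

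For existence, observe that since $f$ is real-valued and integrable, $\mu = f \, dy$ is a finite signed Borel measure on $\Rn$ with $\mu(\Rn) = \int_\Rn f(y)\, dy > 0$ and $|\mu|(\Rn) = \int_\Rn |f(y)|\, dy < \infty$. The growth hypothesis $0 < g(\infty) < \infty$ is assumed, so \autoref{th:weinbergersigned} yields a point $x_c \in \Rn$ with $V(x_c) = 0$. Writing out what $V$ means for this particular $\mu$ gives $\int_\Rn v(x_c + y) f(y) \, dy = 0$, so $x = x_c$ does the job.

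For uniqueness, suppose now $f \geq 0$ with $0 < \int_\Rn f \, dy < \infty$. Then $\mu = f \, dy$ is a finite positive Borel measure. To invoke \autoref{th:weinbergerallspace}(b)(ii), the only nontrivial check is that $\mu$ is not supported in any line. But $\mu$ is absolutely continuous with respect to Lebesgue measure, and any line $L \subset \Rn$ (with $n \geq 2$) has $n$-dimensional Lebesgue measure zero, so $\mu(L) = \int_L f(y) \, dy = 0$, whence $\mu(\Rn \setminus L) = \mu(\Rn) > 0$. The remaining hypotheses on $g$ (increasing, bounded, positive for $r>0$) match \autoref{th:weinbergerallspace}(b)(ii) verbatim, giving uniqueness of $x$.

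The proof is essentially bookkeeping, so there is no serious obstacle; the corollary is just the density specialization of the preceding two theorems. The one point worth flagging is that the ``not supported in a line'' condition needs $n \geq 2$ — in dimension $n = 1$ the ambient space is itself a line and one would instead need part (b)(i), i.e.\ $g$ strictly increasing, to run the same argument.
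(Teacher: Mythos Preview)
Your proof is correct and follows essentially the same route as the paper: set $d\mu = f\,dy$, invoke \autoref{th:weinbergersigned} for existence and \autoref{th:weinbergerallspace}(b)(ii) for uniqueness, noting that an absolutely continuous measure cannot be supported in a line. Your remark about $n=1$ is a fair observation that the paper's brief proof glosses over; in one dimension the ``not supported in a line'' condition is vacuously false and one would indeed need strict monotonicity of $g$ via part (b)(i) instead.
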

\begin{proof} 
Put $d\mu(y) = f(y) \, dy$. Apply \autoref{th:weinbergersigned} for the existence claim, and \autoref{th:weinbergerallspace}(b)(ii) for  uniqueness, noting $\mu$ is not supported in any line.
\end{proof}
The existence assertion in the corollary was proved by Bucur and Henrot \cite[page 355]{BH19}, for nonnegative $f$ and functions $g(r)$ that are increasing and constant for all large $r$.

\section{\bf Proof of \autoref{th:weinberger} --- \textit{g}-center of mass for compactly supported measures}  \label{sec:weinbergerproof}	

Existence of a vanishing point for $V$ will follow from expressing $V$ as the gradient of an energy functional that grows to infinity. Uniqueness is a consequence of strict convexity of the energy, which we establish by two methods, one analytic in nature and the other more geometric; both approaches have their appeal, although the geometric method offers perhaps  more insight. Continuous dependence then follows from uniqueness and a compactness argument.  

\subsection*{Part (a) --- Existence} Let 
\[
G(r)=\int_0^r g(s) \, ds
\]
be the antiderivative of $g$ with $G(0)=0$, and put
\[
\Gamma(x) = G(|x|) ,  \qquad x \in \Rn ,
\]
so that $\nabla \Gamma (x) = g(|x|) x/|x| = v(x)$. (The equation $\nabla \Gamma = v$ continues to hold at $x=0$, since $g(0)=0$ implies $\Gamma(x)=o(|x|)$ and so $\nabla \Gamma(0)=0$, while $v(0)=0$ by definition.) Define an energy functional
\begin{align*}
E(x) 
& = \int_\Rn \Gamma(x+y) \, d\mu(y) \\
& = \int_\Rn G(|x+y|) \, d\mu(y) , \qquad x \in \Rn .
\end{align*}
Notice $E$ is finite-valued and depends continuously on $x$, since $G$ is continuous, $\mu$ has compact support, and $\mu$ is a finite measure. Further, $E(x) \to \infty$ as $|x| \to \infty$, because $G(\infty) = \int_0^\infty g(s) \, ds = \infty$ by assumption and $\mu$ is compactly supported with $\mu(\Rn)>0$. Hence $E$ achieves a minimum at some point $x_c$. 

At this minimum point the gradient must vanish, and so by differentiating through the integral, 
\[
0 = (\nabla E)(x_c) = \int_\Rn v(x_c+y) \, d\mu(y) = V(x_c) ,
\]
thus proving existence of a point at which $V$ vanishes. 

\smallskip
\subsection*{Part (b) --- Uniqueness by geometric convexity} 
Conditions (i) and (ii) each imply that $g(r)$ is increasing and positive for $r>0$. Hence $\int_0^\infty g(r) \, dr = \infty$, and so part (a) guarantees the existence of a critical point $x_c$ at which $V=\nabla E$ vanishes. 

We will show later in the proof that:
\begin{align} 
& \text{if condition (i) holds then the kernel $\Gamma$ is strictly convex;} \label{eq:convexitycondi} \\
& \text{if condition (ii) holds then $\Gamma$ is convex along each straight line, and} \notag \\
& \text{the convexity is strict if the line does not pass through the origin.} \label{eq:convexitycondii}
\end{align}
Assuming these facts for now, if condition (i) holds then $x \mapsto \Gamma(x+y)$ is strictly convex by \eqref{eq:convexitycondi}, for each $y \in \Rn$. Hence integrating with respect to $d\mu(y)$ gives strict convexity of $E(x)$, and so its critical point $x_c$ is unique. Similarly, if condition (ii) holds, then \eqref{eq:convexitycondii} gives convexity of $E(x)$ along each straight line $\gamma$, and the convexity is strict unless the line $\gamma+y$ passes through the origin for $\mu$-almost every $y \in \Rn$. That exceptional case would imply $\gamma$ contains the point $-y$ for $\mu$-almost every $y$, and so $\mu$ would be supported in the line $-\gamma$. But condition (ii) assumes $\mu$ is not supported in any line. Therefore $E(x)$ is strictly convex along each line $\gamma$, implying uniqueness of the critical point $x_c$.

\smallskip
It remains to prove implications \eqref{eq:convexitycondi} and \eqref{eq:convexitycondii}. The first step is to show that if $g$ is increasing and $g(r)>0$ for all $r>0$ (which holds under both assumptions (i) and (ii)) then $\Gamma $ is convex on $\Rn$. Notice $G$ is strictly increasing since $G^\prime=g>0$, and $G$ is convex since $G^\prime=g$ is increasing. Consider $x,\hat{x} \in \Rn$ with $x \neq \hat{x}$, let $0<\e<1$, and observe 
\begin{align}
\Gamma((1-\e)x+\e \hat{x}) 
& = G \big( |(1-\e)x+\e \hat{x} | \big) \notag \\
& \leq G \big( (1-\e)|x|+\e |\hat{x}| \big) \label{eq:convexGamma} \\
& \hspace*{0.8cm} \text{by the triangle inequality and since $G$ is increasing} \notag \\
& \leq (1-\e) G(|x|) + \e G(|\hat{x}|) \qquad \text{by convexity of $G$} \label{eq:convexGamma2}  \\
& = (1-\e) \Gamma(x) + \e \Gamma(\hat{x}) . \notag
\end{align}
Hence $\Gamma$ is convex. Further, since $G$ is strictly increasing, equality holds in \eqref{eq:convexGamma} if and only if equality holds in the triangle inequality, which occurs when the vectors $x$ and $\hat{x}$ point in the same direction.  

Suppose condition (i) holds, so that $G^\prime = g$ is strictly increasing and hence $G$ is strictly convex. If equality holds in \eqref{eq:convexGamma2} then the strict convexity of $G$ implies $|x|=|\hat{x}|$, and so the vectors $x$ and $\hat{x}$ have the same magnitude. Since $x \neq \hat{x}$ by assumption, they must point in different directions, and so inequality \eqref{eq:convexGamma} is strict. Hence $\Gamma$ is strictly convex, proving implication \eqref{eq:convexitycondi}. 

Now suppose condition (ii) holds. To prove \eqref{eq:convexitycondii} we must show that if the convexity of $\Gamma$ along some straight line is not strict, then that line passes through the origin. For this, observe that if equality holds in \eqref{eq:convexGamma} for some $x \neq \hat{x}$ then the points $x$ and $\hat{x}$ must lie on some ray from the origin, and so the line through those points must also pass through the origin.

\smallskip
\subsection*{Part (b) --- Uniqueness by analytic convexity} 
Just as in the geometric proof above, the task reduces  to establishing convexity of the kernel $\Gamma$, that is, to proving  implications \eqref{eq:convexitycondi} and \eqref{eq:convexitycondii}. This time we take a more analytic approach. 

Consider an arbitrary line $x(t)=a+bt$ (where $a,b \in \Rn, |b|=1$). The derivative of $\Gamma$ along the line is
\[
\frac{d\ }{dt} \Gamma \big( x(t) \big) = g\big( |x(t)| \big) \frac{d\ }{dt} |x(t)| = v\big(x(t)\big) \cdot x^\prime(t) ,
\]
where we used that $G^\prime=g$. The convexity implications \eqref{eq:convexitycondi} and \eqref{eq:convexitycondii} to be proved can be rewritten as:
\begin{align} 
\text{if condition (i) holds then $v\big(x(t)\big) \cdot x^\prime(t)$ is strictly increasing;} \label{eq:convexitycondiii} \\
\text{if condition (ii) holds then $v\big(x(t)\big) \cdot x^\prime(t)$ is increasing, and is} \notag \\
\text{strictly increasing if the line does not pass through the origin.} \label{eq:convexitycondiv}
\end{align}

First suppose $g$ is increasing and $g(r)>0$ for all $r>0$, which holds under both conditions (i) and (ii). Suppose further that the line does not pass through the origin. We will show $v\big(x(t)\big) \cdot x^\prime(t)$ is a strictly increasing function of $t \in \R$. Indeed, $t \mapsto |a+bt|$ is strictly convex, as can be deduced easily from the triangle inequality. Write $t_{min}$ for the value at which $|a+bt|$ is minimal, so that $|a+bt|$ is positive and decreasing for $t<t_{min}$ and is positive and increasing for $t>t_{min}$. Hence $g\big( |a+bt| \big)$ has the same properties, because $g(r)$ is positive and increasing for $r>0$. Further, the derivative $(d/dt) |a+bt|$ is negative and strictly increasing for $t<t_{min}$, and positive and strictly increasing for $t>t_{min}$, by the strict convexity. Putting these facts together shows that shows that $g\big( |x(t)| \big) \frac{d\ }{dt} |x(t)|=v\big(x(t)\big) \cdot x^\prime(t)$ is a strictly increasing function of $t$. This proves \eqref{eq:convexitycondiii} and \eqref{eq:convexitycondiv} when the line does not pass through the origin. 

Suppose the line does pass through the origin. Then $|a+bt|=|t-t_{min}|$ and so $v\big(x(t)\big) \cdot x^\prime(t) = \operatorname{sign}(t-t_{min}) g(|t-t_{min}|)$, which is increasing for $t \in \R$ and is strictly increasing if $g$ is strictly increasing. This finishes the proof of \eqref{eq:convexitycondiii} and \eqref{eq:convexitycondiv}.  

\smallskip
\subsection*{Part (c) --- Continuous dependence} 
Part (c) assumes that either (i) holds or else (ii) holds for $\mu, \mu_1,\mu_2,\mu_3,\dots$. These conditions imply $\int_0^\infty g(r) \, dr = \infty$. Thus the hypotheses of parts (a) and (b) are satisfied, with respect to the measures $\mu$ and $\mu_k$. 
Write $x_c(\mu)$ for the unique minimum point of the energy $E$ corresponding to the measure $\mu$, and $x_c(\mu_k)$ for the unique minimum point of the energy $E_k$ corresponding to the measure $\mu_k$. 

The measures $\mu_k$ and $\mu$ are assumed to be supported in some fixed compact set $Y$, and the weak convergence $\mu_k \to \mu$ implies that $\mu_k(Y) \to \mu(Y)$. Let $X$ be an arbitrary compact set in $\Rn$. The kernel $(x,y) \mapsto \Gamma(x+y)$ is uniformly continuous and bounded on $X \times Y$, and it follows easily that the family $\{ E_k(x) \}_{k=1}^\infty$ is uniformly equicontinuous on $X$. Therefore the weak convergence $\mu_k \to \mu$ implies that $E_k(x) \to E(x)$ pointwise and (after a short argument using equicontinuity) uniformly on $X$. 
%
%

Let $\e>0$, and denote by $B$ the open ball of radius $\e$ centered at $x_c(\mu)$. The strict minimizing property of $x_c(\mu)$ implies 
\[
E(x_c(\mu)) < \min_{x \in \partial B} E(x) ,
\]
and so (by choosing $X=\partial B$) we deduce 
\[
E_k(x_c(\mu)) < \min_{x \in \partial B} E_k(x) 
\]
for all large $k$. Consequently, the open ball $B$ contains a local minimum point for the energy $E_k$. This local minimum must be the global minimum point $x_c(\mu_k)$, by strict convexity of the energy. Since $\e$ was arbitrary, we conclude $x_c(\mu_k) \to x_c(\mu)$ as $k \to \infty$, giving continuous dependence.

\section{\bf Proof of \autoref{cor:weinfold} --- orthogonality with a fold}  \label{sec:weinfoldproof}

For existence, apply \autoref{th:weinberger} to the measure $d\mu = (f \, dy) \circ F^{-1}$, that is, with $\mu$ being the pushforward under the fold map $F$ of the measure $f(y) \, dy$ on $\Omega$. Since $\mu$ is not supported on any line, condition (ii) holds in part (b) of the theorem, giving uniqueness. 

To obtain continuous dependence, we must verify the hypotheses of part (c) of the theorem. Write the halfspace as $H(p,t) = \{ y \in \Rn :  y \cdot p \leq t \}$ where $t \in \R$ and the normal vector is $p \in S^{n-1}$. Suppose $p_k \to p$ in $S^{n-1}$ and $t_k \to t$ in $\R$. Write $F_k$ for the fold map associated with the halfspace $H(p_k,t_k)$, and $\mu_k$ for the pushforward under $F_k$ of the measure $f(y) \, dy|_\Omega$. The image of $\Omega$ under $F_k$ is bounded independently of $k$, and so the $\mu_k$ are all supported in some fixed compact set. Now to invoke part (c) of the theorem, we need only show $\mu_k \to \mu$ weakly. For this, consider a continuous bounded function $\psi(y)$, and observe that
\begin{align*}
\int_\Rn \psi \, d\mu_k
& = \int_\Rn \psi(F_k(y)) f(y) \, dy \\
& \to \int_\Rn \psi(F(y)) f(y) \, dy = \int_\Rn \psi \, d\mu 
\end{align*}
by using locally uniform convergence of $F_k$ to $F$, or else by dominated convergence.

\section{\bf Proof of \autoref{th:weinbergerallspace} --- center of mass for measures with unbounded support}  \label{sec:weinbergerproofallspace}

When the measure $\mu$ has unbounded support, the energy $E(x)$ used in proving \autoref{th:weinberger} could be  infinite for all $x$. Such unpleasantness will be avoided by subtracting $\Gamma(y)$ from the kernel and defining the \textbf{renormalized energy} 
\[
\mathcal{E}(x) = \int_\Rn \left( \Gamma(x+y)-\Gamma(y) \right) \, d\mu(y) , \qquad x \in \Rn .
\]
Formally, $\mathcal{E}(x)=E(x)-E(0)$, so that $\mathcal{E}$ may also be regarded as a ``relative energy''. 

We will show the renormalized energy is well defined, continuous, and differentiable with respect to $x$. The first step is to extend the renormalized kernel continuously to the sphere at infinity with respect to the $y$ variable.
\begin{lemma}[Extending the kernel to the sphere at infinity] \label{le:renormkerneleuclid}
Assume the limiting value $g(\infty) = \lim_{r \to \infty} g(r)$ exists and is finite. Suppose $x \to \widetilde{x} \in \Rn$ and $|y| \to \infty$. If $y/|y| \to \widetilde{y}$ for some unit vector $\widetilde{y}$, then
\begin{equation} \label{eq:kernelextend}
\Gamma(x+y)-\Gamma(y) \to g(\infty) \, \widetilde{x} \cdot \widetilde{y} .
\end{equation}
Hence the kernel
\[
K(x,r,\hat{y}) = 
\begin{cases}
\Gamma(x+r\hat{y})-\Gamma(r\hat{y}) , & x \in \Rn, \ \hat{y} \in S^{n-1}, \ r \in [0,\infty), \\
g(\infty) \, x \cdot \hat{y} , & x \in \Rn , \ \hat{y} \in S^{n-1}, \ r=\infty ,
\end{cases}
\]
is continuous and real valued on $\Rn \times [0,\infty] \times S^{n-1}$. In particular, $\Gamma(x+y)-\Gamma(y)$ is 
continuous and bounded whenever $x$ lies in a compact set and $y$ lies in $\Rn$. 
\end{lemma}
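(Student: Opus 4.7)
The plan is to reduce everything to the fundamental theorem of calculus and then control the integrand using the hypothesis $g(\infty)<\infty$. By the definition of $\Gamma$ and $G$,
\[
\Gamma(x+y)-\Gamma(y)=G(|x+y|)-G(|y|)=\int_{|y|}^{|x+y|} g(s)\,ds,
\]
so the question becomes: how long is the interval of integration, and how well can we replace $g$ by its limiting value $g(\infty)$?

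First I would analyze the length $|x+y|-|y|$. The algebraic identity $|x+y|^2-|y|^2=2x\cdot y+|x|^2$ combined with division by $|x+y|+|y|$ gives
\[
|x+y|-|y|=\frac{2x\cdot y+|x|^2}{|x+y|+|y|}.
\]
Under the hypotheses $x\to\widetilde{x}$, $|y|\to\infty$ and $y/|y|\to\widetilde{y}$, the numerator is $2|y|\,\widetilde{x}\cdot\widetilde{y}+o(|y|)$ and the denominator is $2|y|+o(|y|)$, so $|x+y|-|y|\to \widetilde{x}\cdot\widetilde{y}$.

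Next I would approximate $g$ by $g(\infty)$ on the integration interval. Writing
\[
\int_{|y|}^{|x+y|}g(s)\,ds=g(\infty)\bigl(|x+y|-|y|\bigr)+\int_{|y|}^{|x+y|}\bigl(g(s)-g(\infty)\bigr)\,ds,
\]
it suffices to show the last integral tends to zero. The length of the interval is bounded by $|x|$, which stays bounded since $x\to\widetilde{x}$, while both endpoints $|y|$ and $|x+y|$ tend to infinity, so on the interval $g(s)-g(\infty)$ is uniformly small by the definition of $g(\infty)$. Combining with the previous step yields \eqref{eq:kernelextend}.

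To upgrade to joint continuity of $K$ on $\Rn\times[0,\infty]\times S^{n-1}$, note that on the open part $r<\infty$ continuity of $K$ is immediate from continuity of $\Gamma$, so only the boundary $r=\infty$ requires work. There I would fix $(\widetilde{x},\infty,\widetilde{y})$ and consider any sequence $(x_k,r_k,\hat{y}_k)\to(\widetilde{x},\infty,\widetilde{y})$; the argument above applied with $y_k=r_k\hat{y}_k$ gives $K(x_k,r_k,\hat{y}_k)\to g(\infty)\,\widetilde{x}\cdot\widetilde{y}=K(\widetilde{x},\infty,\widetilde{y})$, which is exactly continuity at such a boundary point. The final boundedness claim follows because $[0,\infty]\times S^{n-1}$ is compact, so a continuous function on $X\times[0,\infty]\times S^{n-1}$ with $X\subset\Rn$ compact is bounded; note the $y=0$ case reduces to $\Gamma(x)$ (since $\Gamma(0)=0$) and is harmless.

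The only slightly delicate step is the error term $\int_{|y|}^{|x+y|}(g(s)-g(\infty))\,ds$: one must be careful that $g$ is not assumed monotone, so the control is purely via $|g(s)-g(\infty)|<\varepsilon$ for $s$ large, multiplied by the length $|x|$ of the interval, which is harmless once $x$ stays in a compact set.
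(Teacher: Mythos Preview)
Your proof is correct and follows essentially the same route as the paper: both compute $|x+y|-|y|\to\widetilde{x}\cdot\widetilde{y}$ via the difference-of-squares identity, then use $g(s)\to g(\infty)$ to control $\int_{|y|}^{|x+y|}g(s)\,ds$, and deduce continuity and boundedness from compactness of $[0,\infty]\times S^{n-1}$. Your additive splitting $g(\infty)\bigl(|x+y|-|y|\bigr)+\int_{|y|}^{|x+y|}\bigl(g-g(\infty)\bigr)$ is a mild streamlining over the paper's mean-value version $\bigl(|x+y|-|y|\bigr)\fint_{|y|}^{|x+y|} g$, since it handles the degenerate case $|x+y|=|y|$ without a separate argument.
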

\begin{proof}
As $x \to \widetilde{x}, |y| \to \infty$ and $y/|y| \to \widetilde{y}$, one finds that
\[
|x+y| - |y| = \frac{|x+y|^2-|y|^2}{|x+y|+|y|} = \frac{|x|^2+2x \cdot y}{|x+y|+|y|} \to \widetilde{x} \cdot \widetilde{y} .
\]
Suppose to begin with $\widetilde{x} \cdot \widetilde{y} \neq 0$, so that (by the preceding formula) we may assume $|x+y| \neq |y|$ as we pass to the limit. By starting with the definition of $\Gamma$ and then multiplying and dividing by $|x+y| - |y|$ in order to get a mean value integral, we find
\begin{align*}
\Gamma(x+y) - \Gamma(y) 
& = \int_{|y|}^{|x+y|} g(s) \, ds \\
& = \big( |x+y| - |y| \big) \fint_{|y|}^{|x+y|} g(s) \, ds \\
& \to (\widetilde{x} \cdot \widetilde{y}) g(\infty) 
\end{align*}
since $|y| \to \infty$ and $|x+y| \to \infty$. If $\widetilde{x} \cdot \widetilde{y} = 0$ then the argument above continues to apply except for those values of $x,y$ such that $|x+y|=|y|$; but at those points we already have $\Gamma(x+y) - \Gamma(y) = 0 $, which is the desired limiting value. This completes the proof of the limit \eqref{eq:kernelextend}. 

The kernel $K$ is continuous with respect to all three variables when $r<\infty$. When $r=\infty$ the kernel is continuous with respect to $x$ and $\hat{y}$. Thus the only case remaining to check is when $x$ and $\hat{y}$ converge to points $\widetilde{x} \in \Rn$ and $\widetilde{y} \in S^{n-1}$ respectively and the finite value $r$ tends to infinity. Continuity in that case means that $K(x,r,\hat{y}) \to K(\widetilde{x},\infty,\widetilde{y}) = g(\infty) \widetilde{x} \cdot \widetilde{y}$, which is exactly the limit proved in \eqref{eq:kernelextend} with $y=r\hat{y}$. 

Finally, for each compact set $X \subset \Rn$ the kernel $K(x,r,\hat{y})$ is continuous on $X \times [0,\infty] \times S^{n-1}$, and so certainly the kernel is bounded there. 
\end{proof}
Boundedness of $\Gamma(x+y)-\Gamma(y)$ from \autoref{le:renormkerneleuclid} and the finiteness of $\mu$ together imply that the renormalized energy $\mathcal{E}(x)$ is well defined and finite-valued. Further, it depends continuously on $x \in \Rn$, by continuity of the kernel. Differentiation through the integral is justified similarly, since $g$ and thus $v=\nabla \Gamma$ are bounded, giving 
\begin{align*}
\nabla \mathcal{E}(x) 
& = \int_\Rn \nabla_{\! x} \left( \Gamma(x+y)-\Gamma(y) \right)  \, d\mu(y) \\
& = \int_\Rn v(x+y) \, d\mu(y) 
= V(x) .
\end{align*}
That is, critical points of the renormalized energy are zeros of $V$. To show the energy has a minimum point (hence a critical point), we will prove $\mathcal{E}(x) \to \infty$ as $|x| \to \infty$.

To do so, we develop two lower bounds on the renormalized kernel. The first estimate is the global worst-case.
\begin{lemma} \label{le:worsteuclid}
If $g$ is bounded for $r \geq 0$, then $\left| \Gamma(x+y)-\Gamma(y) \right| \leq (\sup |g|) |x|$ and hence
\[
\Gamma(x+y)-\Gamma(y) \geq - (\sup |g|) |x| , \qquad x ,y\in \Rn .
\]
\end{lemma}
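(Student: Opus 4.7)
The plan is to reduce the two-dimensional bound to a one-dimensional estimate on the antiderivative $G$. First I would rewrite
\[
\Gamma(x+y) - \Gamma(y) = G(|x+y|) - G(|y|) = \int_{|y|}^{|x+y|} g(s) \, ds,
\]
using $G^\prime = g$ and the convention that the integral changes sign if the upper limit is smaller than the lower one.

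Next, bounding the integrand in absolute value by $\sup |g|$ gives
\[
\bigl| \Gamma(x+y) - \Gamma(y) \bigr| \leq (\sup |g|) \cdot \bigl| |x+y| - |y| \bigr|.
\]
Then the reverse triangle inequality $\bigl| |x+y| - |y| \bigr| \leq |x|$ yields the first claim. The lower bound in the statement is immediate from the first claim by writing $\Gamma(x+y) - \Gamma(y) \geq -|\Gamma(x+y) - \Gamma(y)|$.

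There is no real obstacle here: the argument is a short chain of standard inequalities (fundamental theorem of calculus, the obvious supremum bound on an integral, and the reverse triangle inequality). The only mild subtlety is keeping track of the orientation of the interval of integration when $|x+y| < |y|$, which is handled uniformly by passing to absolute values before estimating.
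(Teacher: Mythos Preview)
Your proof is correct and is essentially identical to the paper's own argument: both write $\Gamma(x+y)-\Gamma(y)=\int_{|y|}^{|x+y|} g(s)\,ds$, bound the integral by $(\sup|g|)\bigl||x+y|-|y|\bigr|$, and finish with the reverse triangle inequality.
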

\begin{proof}
\begin{align*}
\left| G(|x+y|) - G(|y|) \right| 
& = \left| \int_{|y|}^{|x+y|} g(s) \, ds \right| \\
& \leq (\sup |g|) \big| |x+y|-|y| \big| \leq (\sup |g|) |x| .
\end{align*}
\end{proof}
The second estimate, in the next lemma, provides a positive, uniform lower bound for $y$ in the ball $B(R)$ of radius $R$ centered at the origin. 
\begin{lemma} \label{le:bettereuclid}
Suppose the limiting value $g(\infty) = \lim_{r \to \infty} g(r)$ exists, and is positive and finite. If $R>0$ is fixed then  
for all sufficiently large $|x|$ we have 
\[
\Gamma(x+y)-\Gamma(y) > \frac{1}{2} g(\infty) |x| , \qquad y \in B(R) .
\]
\end{lemma}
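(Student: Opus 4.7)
The plan is to rewrite $\Gamma(x+y)-\Gamma(y)$ as an integral of $g$ and exploit the fact that, when $|x|$ is large and $y \in B(R)$, the interval of integration is long (of length comparable to $|x|$) and lies mostly in the region where $g$ is close to $g(\infty)>0$. Concretely, I would use
\[
\Gamma(x+y) - \Gamma(y) = G(|x+y|) - G(|y|) = \int_{|y|}^{|x+y|} g(s)\, ds
\]
together with the observation that $|x+y| \geq |x|-R$ and $|y|\leq R$, so the integration interval has length at least $|x|-2R$ and its right endpoint tends to infinity with $|x|$.

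The key quantitative step is a choice of scale. Since $g(\infty)$ exists and is positive, I would fix $\varepsilon = g(\infty)/4$ and select $r_0 \geq R$ such that $g(s) \geq g(\infty) - \varepsilon = \tfrac{3}{4}g(\infty)$ for every $s \geq r_0$. Continuity of $g$ on $[0,\infty)$ combined with the existence of a finite limit at infinity yields a uniform bound $|g| \leq M$. For $|x| > R + r_0$ and $y \in B(R)$ we have $|y| \leq r_0 \leq |x+y|$, so I would split the integral as
\[
\int_{|y|}^{|x+y|} g(s)\, ds = \int_{|y|}^{r_0} g(s)\, ds + \int_{r_0}^{|x+y|} g(s)\, ds \geq -M r_0 + \tfrac{3}{4} g(\infty)\bigl(|x+y| - r_0\bigr).
\]
Using $|x+y| \geq |x| - R$ this becomes $\Gamma(x+y)-\Gamma(y) \geq \tfrac{3}{4} g(\infty)\,|x| - C$, where the constant $C = Mr_0 + \tfrac{3}{4}g(\infty)(R+r_0)$ depends only on $R$, $M$, $r_0$, and $g(\infty)$, and in particular not on $x$ or $y$. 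Since $\tfrac{3}{4}g(\infty)|x| - \tfrac{1}{2}g(\infty)|x| = \tfrac{1}{4}g(\infty)|x| \to \infty$, the stated lower bound $\tfrac{1}{2}g(\infty)|x|$ holds once $|x| \geq \max\{R+r_0,\, 4C/g(\infty)\}$, uniformly in $y \in B(R)$.

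There is no real obstacle here; the only mild care needed is in the bookkeeping. The choice $\varepsilon = g(\infty)/4$ leaves a definite gap of $\tfrac{1}{4}g(\infty)|x|$ to absorb the bounded, $|x|$-independent error coming from the integral over $[|y|, r_0]$ (where $g$ may be negative or small). One could equally well use any $\varepsilon \in (0, g(\infty)/2)$; the argument is essentially that the "bad" portion of the interval has bounded length while the "good" portion has length asymptotic to $|x|$, so the integral is $\bigl(g(\infty)+o(1)\bigr)|x|$ as $|x|\to\infty$, uniformly for $y\in B(R)$.
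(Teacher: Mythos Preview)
Your proof is correct and follows essentially the same approach as the paper: write $\Gamma(x+y)-\Gamma(y)=\int_{|y|}^{|x+y|} g(s)\,ds$, split the interval at a radius beyond which $g$ is close to $g(\infty)$, bound the short initial piece using $\sup|g|$, and absorb the resulting $|x|$-independent error into the gap between $\tfrac{3}{4}g(\infty)|x|$ and $\tfrac{1}{2}g(\infty)|x|$. The only differences from the paper are cosmetic (the paper uses the threshold $\tfrac{2}{3}g(\infty)$ and names the cutoff radius $R^*$ rather than $r_0$).
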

\begin{proof}
Notice $g(r)$ is bounded, since it has a finite limit as $r \to \infty$. Because that limiting value is positive, a number $R^* > R$ exists such that $g(r) \geq (2/3)g(\infty)>0$ on $(R^*,\infty)$. Thus for $y \in B(R)$ and $|x| > R^*+R > 2R$, we have $|x+y|>|y|$ and so 
\begin{align*}
G(|x+y|) - G(|y|) 
& = \int_{|y|}^{|x+y|} g(s) \, ds \\
& \geq \int_{R^*}^{|x|-R} g(s) \, ds - (\sup |g|)(R^*-|y|) \\
& \geq \frac{2}{3} g(\infty) (|x|-R-R^*) - (\sup |g|) R^* \\
& > \frac{1}{2} g(\infty) |x|
\end{align*}
whenever $|x|$ is sufficiently large. 
\end{proof}
Now we can prove \autoref{th:weinbergerallspace}. 

\subsection*{Part (a) --- Existence.} Assume the limiting value $g(\infty) = \lim_{r \to \infty} g(r)$ exists, and is positive and finite, so that $g(\infty)>0$ and $g$ is bounded. Since $\mu\big(\Rn \setminus B(R)\big) \to 0$ and $\mu \big( B(R) \big) \to \mu(\Rn)>0$ as $R \to \infty$, we may fix $R$ large enough that 
\[
(\sup |g|) \mu\big(\Rn \setminus B(R)\big) < \frac{1}{4} g(\infty) \mu \big( B(R) \big) .
\]

As explained earlier in the section, the renormalized energy $\mathcal{E}(x)$ is finite-valued and differentiable, with gradient $\nabla \mathcal{E} = V$. To show $V$ vanishes somewhere, it is enough to prove $\mathcal{E}(x) \to \infty$ as $|x| \to \infty$, because then the energy has a minimum point. By decomposing $\Rn$ into the ball $B(R)$ and its complement, and estimating the renormalized kernel from below on those two sets by \autoref{le:bettereuclid} and \autoref{le:worsteuclid} respectively, we find for sufficiently large $|x|$ that 
\[
\mathcal{E}(x)
\geq \frac{1}{2} g(\infty) |x| \mu\big( B(R) \big) - (\sup |g|) |x|  \mu\big(\Rn \setminus B(R)\big) .
\]
Hence by choice of $R$ above, 
\[
\mathcal{E}(x)
\geq \frac{1}{4} g(\infty) \mu\big( B(R) \big) |x| ,
\]
which tends to infinity as $|x| \to \infty$. This completes the existence proof. 

\smallskip
\subsection*{Part (b) --- Uniqueness.} Assumptions (i) and (ii) each imply that the limiting value $g(\infty)$ is positive and finite, and so a vanishing point $x_c$ exists by part (a). 

The uniqueness of $x_c$ is proved by establishing strict convexity of the renormalized energy $\mathcal{E}(x)$, almost exactly as we did for the original energy $E(x)$ in the compactly supported case (\autoref{sec:weinbergerproof}). The only difference is that one must subtract $\Gamma(y)$ from the kernel before integrating to get the renormalized energy. This causes no difficulty for the proof, since the map $x \mapsto \Gamma(x+y)$ has exactly the same convexity properties as the renormalized map $x \mapsto \Gamma(x+y)-\Gamma(y)$. 

\smallskip
\subsection*{Part (c) --- Continuous dependence.} 
The hypotheses of parts (a) and (b) are satisfied for the measures $\mu$ and $\mu_k$, since part (c) assumes that either (i) holds or else (ii) holds for $\mu, \mu_1,\mu_2,\mu_3,\dots$. Let $x_c(\mu)$ be the unique minimum point of the renormalized energy $\mathcal{E}$ corresponding to the measure $\mu$, and $x_c(\mu_k)$ be the unique minimum point of the renormalized energy $\mathcal{E}_k$ corresponding to the measure $\mu_k$.  

The weak convergence $\mu_k \to \mu$ implies that $\mu_k(\Rn) \to \mu(\Rn)$, and so the measure $\mu_k(\Rn)$ is bounded independently of $k$. Hence the family $\{ \mathcal{E}_k \}$ is uniformly equicontinuous on $\Rn$, because boundedness of $v$ implies a bound on $\nabla \mathcal{E}_k=V_k$ that is independent of $k$. The weak convergence $\mu_k \to \mu$ and continuity and boundedness of $y \mapsto \Gamma(x+y)-\Gamma(y)$ (\autoref{le:renormkerneleuclid}) imply that $\mathcal{E}_k(x) \to \mathcal{E}(x)$ for each fixed $x \in \Rn$. A short argument using equicontinuity shows the convergence is uniform on each compact set $X \subset \Rn$. 

Let $\e>0$, and write $B$ for the open ball of radius $\e$ centered at $x_c(\mu)$. The strict minimizing property of $x_c(\mu)$ yields 
\[
\mathcal{E}(x_c(\mu)) < \min_{x \in \partial B} \mathcal{E}(x) ,
\]
and so (with $X=\partial B$) we get for all large $k$ that 
\[
\mathcal{E}_k(x_c(\mu)) < \min_{x \in \partial B} \mathcal{E}_k(x) .
\]
Hence $\mathcal{E}_k$ has a local minimum somewhere in the open ball $B$. This local minimum can occur only at the global minimum point $x_c(\mu_k)$, by strict convexity of the renormalized energy. Letting $\e \to 0$ now shows that $x_c(\mu_k) \to x_c(\mu)$ as $k \to \infty$, which is the desired continuous dependence.

\section{\bf Proof of \autoref{th:weinbergersigned} --- existence of center of mass for signed measures}  \label{sec:weinbergerproofsigned}

We need a two-sided version of the uniform bound in \autoref{le:bettereuclid}. 
\begin{lemma} \label{le:bettereuclidtwosided}
Suppose the limiting value $g(\infty) = \lim_{r \to \infty} g(r)$ exists, and is positive and finite. If $0<\e<1/2$ and $R>0$ are fixed and $|x|$ is sufficiently large, then   
\[
(1-2\e) g(\infty) |x| < \Gamma(x+y)-\Gamma(y) < (1+2\e)g(\infty) |x| , \qquad y \in B(R) .
\]
\end{lemma}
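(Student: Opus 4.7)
The plan is to mimic the proof of \autoref{le:bettereuclid}, but since we now need two-sided control we must sandwich $g(s)$ between $(1 \pm \e) g(\infty)$ on a tail interval $[R^*,\infty)$, rather than just bounding it from below by $(2/3) g(\infty)$.

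First I would fix a small $\e \in (0,1/2)$ and use the hypothesis $\lim_{r \to \infty} g(r) = g(\infty) \in (0,\infty)$ to choose $R^* > R$ large enough that
\[
(1-\e) g(\infty) \leq g(s) \leq (1+\e) g(\infty) \qquad \text{for all } s \geq R^* .
\]
Note also that $g$ is bounded on $[0,\infty)$, since it is continuous on $[0,R^*]$ and has a finite limit at infinity; call this bound $M = \sup |g|$.

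Next, for $y \in B(R)$ and $|x| > R^* + R$ I would observe that $|x+y| \geq |x| - R > R^* > |y|$, so the integral defining $\Gamma(x+y) - \Gamma(y)$ is over an interval of positive length that extends beyond $R^*$. Splitting gives
\[
\Gamma(x+y) - \Gamma(y) = \int_{|y|}^{|x+y|} g(s) \, ds = \int_{|y|}^{R^*} g(s) \, ds + \int_{R^*}^{|x+y|} g(s) \, ds .
\]
The first piece is bounded in absolute value by $M R^*$, a constant depending only on $g$ and $R$. The second piece is sandwiched between $(1 \pm \e) g(\infty) (|x+y| - R^*)$. Since $\big| |x+y| - |x| \big| \leq R$, we have $|x+y| - R^* = |x| + O(R + R^*)$ with the $O(\cdot)$ term bounded by a constant independent of $x$ and $y \in B(R)$.

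Combining these gives
\[
\Gamma(x+y) - \Gamma(y) = g(\infty) |x| \pm \e g(\infty) |x| \pm C,
\]
where $C$ depends only on $g$, $R$, and $R^*$, and where the two $\pm$ signs can be chosen consistently to match the direction of the desired inequality. For $|x|$ sufficiently large (so that $C \leq \e g(\infty) |x|$), the constant error is absorbed into the $\e g(\infty) |x|$ term and we obtain
\[
(1 - 2\e) g(\infty) |x| < \Gamma(x+y) - \Gamma(y) < (1 + 2\e) g(\infty) |x|
\]
uniformly in $y \in B(R)$, as required. The only mildly delicate point is keeping the constant error $C$ uniform in $y$, which is automatic because $y$ is restricted to the compact ball $B(R)$ and $R^*$ was chosen independently of $y$; no step presents a real obstacle.
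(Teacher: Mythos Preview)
Your proposal is correct and follows essentially the same approach as the paper's proof: choose $R^*>R$ so that $g$ is trapped between $(1\pm\e)g(\infty)$ on $[R^*,\infty)$, split the integral $\int_{|y|}^{|x+y|} g(s)\,ds$ at $R^*$, bound the short piece by $(\sup|g|)R^*$ and the long piece by $(1\pm\e)g(\infty)(|x+y|-R^*)$, and then absorb all $x$-independent error terms into the extra $\e g(\infty)|x|$ for $|x|$ large. The paper's write-up differs only in cosmetic bookkeeping (it replaces $|x+y|$ by $|x|\mp R$ before estimating rather than after), not in substance.
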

\begin{proof}
Notice $g$ is bounded. Since $g(r)$ converges to $g(\infty)$ as $r \to \infty$, we may choose $R^* > R$ such that 
\[
0 < (1-\e) g(\infty) \leq g(r) \leq (1+\e) g(\infty) < \infty , \qquad r \in (R^*,\infty) .
\] 
Then for $y \in B(R)$ and $|x| > R^*+R$, we have 
\begin{align*}
G(|x+y|) - G(|y|) 
& = \int_{|y|}^{|x+y|} g(s) \, ds \\
& \geq \int_{R^*}^{|x|-R} g(s) \, ds - (R^*-|y|) \sup |g| \\
& \geq (1-\e) g(\infty) (|x|-R-R^*) - R^* \sup |g| \\
& > (1-2\e) g(\infty) |x|
\end{align*}
whenever $|x|$ is sufficiently large. Similarly, one obtains an upper bound:
\begin{align*}
G(|x+y|) - G(|y|) 
& = \int_{|y|}^{|x+y|} g(s) \, ds \\
& \leq \int_{R^*}^{|x|+R} g(s) \, ds + (\sup |g|) R^* \\
& \leq (1+\e) g(\infty) (|x|+R-R^*) + (\sup |g|) R^* \\
& < (1+2\e) g(\infty) |x|
\end{align*}
whenever $|x|$ is sufficiently large. 
\end{proof}

To start proving \autoref{th:weinbergersigned}, notice the assumption $\mu(\Rn)>0$ implies $\mu^+(\Rn) > \mu^-(\Rn)$, and so we may choose $0<\e<1/2$ such that 
\[
(1-2\e) \mu^+(\Rn) > (1+2\e) \mu^-(\Rn) .
\]
Hence by fixing $R>0$ sufficiently large we can ensure that 
\begin{equation} \label{eq:Rchoice}
\begin{split}
& (1-2\e) g(\infty) \mu^+\big(B(R)\big) \\
& > (1+2\e) g(\infty) \mu^-\big(B(R)\big) + (\sup |g|) |\mu| \big(\Rn \setminus B(R)\big) + R^{-1} .
\end{split}
\end{equation}
As in the proof of \autoref{th:weinbergerallspace}, differentiating through the integral shows $\nabla \mathcal{E} = V$, and so to show the gradient $V$ vanishes somewhere, it is enough to prove $\mathcal{E}(x) \to \infty$ as $|x| \to \infty$. 

Decompose $\Rn$ into the ball $B(R)$ and its complement, and then estimate the renormalized energy integral from below as follows: for $y \in \Rn \setminus B(R)$ and $d\mu(y)$ use \autoref{le:worsteuclid}; for $y \in B(R)$ and $d\mu^+(y)$ use the lower bound from \autoref{le:bettereuclidtwosided}; and for $y \in B(R)$ and $-d\mu^-(y)$ use the upper bound from \autoref{le:bettereuclidtwosided}. The end result is that for sufficiently large $|x|$,  
\begin{align*}
\mathcal{E}(x)
& \geq \Big( (1-2\e) g(\infty) \mu^+\big( B(R) \big) - (1+2\e)g(\infty) \mu^-\big(B(R)\big) \\
& \qquad \qquad - (\sup |g|) |\mu|\big(\Rn \setminus B(R)\big) \Big) |x| \\
& \geq R^{-1} |x| \qquad \text{by choice of $R$ in \eqref{eq:Rchoice}} \\
& \to \infty
\end{align*}
as $|x| \to \infty$.

\section*{Acknowledgments}
This research was supported by a grant from the Simons Foundation (\#429422 to Richard Laugesen) and the University of Illinois Research Board (RB19045). I am grateful to Mark Ashbaugh and Jeffrey Langford for stimulating conversations and references about center of mass results. 

\bibliographystyle{plain}

\end{document}